\newtheorem{thm}{Theorem}[section]
\newtheorem{cor}[thm]{Corollary}
\newtheorem{lem}[thm]{Lemma}
\newtheorem{prop}[thm]{Proposition}
\newtheorem{obs}[thm]{Remark}
\numberwithin{equation}{section}
\title[Convergence to initial data of heat problem on $\mathbb{H}_n$]{About the convergence to initial data of the heat problem on the Heisenberg group}
\author{Isolda Eugenia Cardoso}
\address{isolda@fceia.unr.edu.ar, ECEN - FCEIA, Universidad Nacional de Rosario.}
\keywords{Convergence to initial data, Heisenberg group, Heat equation}
\begin{document}

\begin{abstract}
We study the heat equation on the Heisenberg group and obtain integrability conditions on the initial data that guarantee the existence of the corresponding solutions. We also characterize the weighted Lebesgue spaces for which the solutions converge almost everywhere to the initial data as the time tends to zero. As a further application, we prove weighted estimates for the local maximal operator associated with the heat kernel.
\end{abstract}

\maketitle

\section{Introduction}\label{Sec:intro}

\par Understanding the pointwise convergence of solutions to the heat equation to their initial data, together with its relation to the weighted boundedness of maximal operators, is a classical problem in harmonic analysis that has been extensively studied in the Euclidean setting. The aim of this paper is to obtain analogous results on the Heisenberg group, which is a nonabelian and noncompact Lie group where classical Euclidean techniques do not apply directly. More precisely, we adapt the arguments from \cite{HTV2013} to this setting by exploiting three fundamental features of the Heisenberg group: its homogeneous structure in the sense of Folland and Stein, its sub-Riemannian geometry and the associated Carnot-Carathéodory distance, and its structure as a space of homogeneous type in the sense of Coifman and Weiss.


\par Let us consider the initial value problem for the heat equation associated with the sublaplacian $\mathcal{L}$ on the Heisenberg group $\mathbb{H}_{n}$ ($n\ge 1$). Given $S>0$, we study
\begin{align}\label{heat.equation}
u_{s}(g,s) = & -\mathcal{L}u(g,s), \qquad (g,s)\in\mathbb{H}_{n}\times (0,S), \\ \label{initial.data}
u(g,0) = & f(g), \qquad g\in\mathbb{H}_{n}.
\end{align}

\par Our goal is to obtain integrability conditions on the initial data $f$ such that the function $u(g,s)=e^{-s\mathcal{L}}f(g)$ is well defined on $\mathbb{H}_{n}\times(0,S)$ as an absolutely convergent integral, solves \eqref{heat.equation}, and satisfies $\lim_{s\to 0^{+}}u(g,s)=f(g)$ for almost every $g\in\mathbb{H}_{n}$. We also characterize a class of weights $D_{p}(\mathcal{L})$ for which the above convergence holds for every $f\in L_{v}^{p}(\mathbb{H}_{n})$. Finally, we study the boundedness on $L_{v}^{p}(\mathbb{H}_{n})$ of the corresponding local maximal operator $Q_{a}^{\ast}$.

\par In the Euclidean setting, if we consider the initial value problem for the heat equation associated with the Laplacian $\Delta$ on $\mathbb{R}^{d}$, it is well known that, under suitable conditions on the initial data $f$, the solutions $u(x,s)$ converge to $f(x)$ as $s\to 0^{+}$. Moreover, the solutions can be written in terms of the heat semigroup as $u(x,s)=e^{-s\Delta}f(x)$. Analogous results are also known for the Poisson semigroup.

\par We now mention some antecedents. In \cite{HTV2013}, Hartzstein, Torrea and Viviani characterized the weighted Lebesgue spaces for which the solutions of the heat equation converge a.e. when $s\to 0^{+}$. They also proved the $L^p(v)\to L^p(w)$ boundedness of the local maximal operator for suitable weights $v$ and $w$. Corresponding results for the Poisson equation were also obtained there. In \cite{AFStTo2012}, Abu-Falahah, Stinga and Torrea studied these problems for some heat-diffusion equations associated with the harmonic oscillator and the Ornstein--Uhlenbeck operator. The Hermite and Ornstein--Uhlenbeck settings, for both the heat and Poisson semigroups, were studied by Garrig\'os, Hartzstein, Signes, Torrea and Viviani in \cite{GHSTV2016}, while the Bessel setting was treated by the author in \cite{C2016}.

\par Recently some analogous problems were studied in other spaces different from the Euclidean setting, for example Alvarez-Romero, Barrios and Betancor studied the Laplace operator on homogeneous trees in \cite{ARBaBe2022} and Bruno and Papageorgiou studied several operators on symmetric spaces of noncompact type in \cite{BrPa2023}. These settings have some good properties and sufficient tools have been developed to make them attractive to study.

\par In our work we are motivated by a similar idea: it is interesting to understand how results for PDEs in the Euclidean setting extend to Lie groups in general, and to Carnot or homogeneous groups in particular, since these spaces share the same underlying topological structure. The Heisenberg group $\mathbb{H}_{n}$ is a basic example of such a group: it can be identified with $\mathbb{R}^{2n+1}$ as a manifold, but its group law is noncommutative. On the other hand, a rich theory has been developed on $\mathbb{H}_{n}$, and throughout the paper we will refer to the corresponding literature without aiming to be exhaustive.

\par More precisely, we will be able to reproduce, to a large extent, the results from \cite{HTV2013} for the heat problem by exploiting the rich structure of the Heisenberg group. More specifically, we use the homogeneous structure in the sense of Folland and Stein \cite{FoSt1982}, which provides the diffusion semigroup and the associated heat kernel; the sub-Riemannian structure and its Carnot-Carath\'eodory distance $d$, which together with Li's estimates from \cite{Li2007} provide the precise lower and upper bounds needed to obtain the integrability condition; and the structure of homogeneous type in the sense of Coifman and Weiss \cite{CoWe1977}, which plays a central role in the boundedness of the local maximal operator.

\par We now state our main results:

\begin{thm}\label{Thm:integrability.conditions}
Let $f:\mathbb{H}_{n}\to\mathbb{R}$ be a measurable function such that for every $K>0$,
\[
\int_{|z|\le K} |f(h)| \, dh < \infty.
\]
Assume that for every $s\in(0,S)$,
\begin{equation}\label{condint}
	\int\limits_{\mathbb{H}_{n}} \phi_{s}(g)|f(g)|dg<\infty,
\end{equation}
where, for $g=(z,t)\in\mathbb{H}_{n}$,
\begin{equation*}
	\phi_{s}(g)= \frac{1}{s^{n-1}} e^{-\frac{d(g)^2}{4s}}
	\left(1+\frac{d(g)^{2}}{s}\right)^{n-1}
	\left( 1 + \frac{|z|d(g)}{s} \right)^{-n+\frac{1}{2}},
\end{equation*}
and $d$ denotes the Carnot--Carath\'eodory distance.

Then the heat integral $u(g,s)$ is absolutely convergent and satisfies:
\begin{enumerate}
	\item[(i)] $u(g,s)\in C^{\infty}(\mathbb{H}_{n}\times(0,S))$ and solves the heat equation associated with the Heisenberg sublaplacian;
	\item[(ii)] $\lim\limits_{s\to 0^{+}} u(g,s) = f(g)$ for a.e. $g\in\mathbb{H}_{n}$.
\end{enumerate}

Conversely, if the heat integral of $f$ is finite for every $s\in(0,S)$ and some $g\in\mathbb{H}_{n}$, then $f$ satisfies \eqref{condint}.
\end{thm}

\par From Theorem \eqref{Thm:integrability.conditions} we define a class of weights such that (i) and (ii) hold for every initial data $f$ in a weighted $L^p$ space. More precisely, for $1\le p <\infty$, let $D_{p}(\mathcal{L},S)$ be the class of all weights $v: \mathbb{H}_{n}\to\mathbb{R}^{+}$ such that (i) and (ii) from Theorem \ref{Thm:integrability.conditions} hold for every $f\in L^{p}(\mathbb{H}_{n},v)$. This class can be characterized as follows.

\begin{cor}\label{Cor:weight.characterization}
Let $1\le p<\infty$ and let $p'$ be such that $\frac{1}{p}+\frac{1}{p'}=1$. A weight $v$ belongs to the class $D_{p}(\mathcal{L},S)$ if and only if $v^{-\frac{1}{p}}\phi_{s} \in L^{p'}(\mathbb{H}_{n})$ for every $s\in(0,S)$.
\end{cor}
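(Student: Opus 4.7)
The plan is to reduce the corollary to a duality statement via Theorem~\ref{Thm:integrability.conditions}. That theorem shows that, for a locally integrable function $f$, properties (i) and (ii) hold if and only if $f$ satisfies the integral condition \eqref{condint} for every $s\in(0,S)$. Therefore, $v\in D_{p}(\mathcal{L},S)$ is equivalent to requiring that every $f\in L^{p}(\mathbb{H}_{n},v)$ satisfy \eqref{condint} for every $s\in(0,S)$, and the whole question becomes one of characterizing when the linear functional $f\mapsto\int\phi_{s}|f|\,dg$ is finite on all of $L^{p}(v)$.

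For the sufficient direction, I would assume $v^{-1/p}\phi_{s}\in L^{p'}(\mathbb{H}_{n})$ for every $s\in(0,S)$. Given $f\in L^{p}(\mathbb{H}_{n},v)$, Hölder's inequality yields
\begin{equation*}
\int_{\mathbb{H}_{n}}\phi_{s}(g)|f(g)|\,dg
= \int_{\mathbb{H}_{n}}\bigl(v(g)^{-1/p}\phi_{s}(g)\bigr)\bigl(|f(g)|\,v(g)^{1/p}\bigr)\,dg
\le \|v^{-1/p}\phi_{s}\|_{p'}\,\|f\|_{L^{p}(v)}<\infty,
\end{equation*}
which is precisely \eqref{condint}. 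Since $\phi_{s}$ is continuous and bounded away from zero on compact subsets of $\mathbb{H}_{n}$, the hypothesis also forces $v^{-1/p}$ to be locally in $L^{p'}$, so another application of Hölder against the indicator of $\{|z|\le K\}$ yields the local integrability of $f$. Theorem~\ref{Thm:integrability.conditions} then delivers (i) and (ii).

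For the necessary direction I would argue by contrapositive. If $v^{-1/p}\phi_{s_{0}}\notin L^{p'}(\mathbb{H}_{n})$ for some $s_{0}\in(0,S)$, the standard converse of Hölder's inequality (the functional $h\mapsto\int v^{-1/p}\phi_{s_{0}}h\,dg$ is unbounded on non-negative elements of $L^{p}$) produces a non-negative $h\in L^{p}(\mathbb{H}_{n})$ with $\int v^{-1/p}\phi_{s_{0}}h\,dg=\infty$. Setting $f:=h\,v^{-1/p}$ places $f$ in $L^{p}(\mathbb{H}_{n},v)$ while making $\int\phi_{s_{0}}|f|\,dg=\infty$. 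The converse half of Theorem~\ref{Thm:integrability.conditions} then forces the heat integral at time $s_{0}$ to diverge at every $g$, so (i) and (ii) cannot both hold, contradicting $v\in D_{p}(\mathcal{L},S)$.

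The step I expect to require the most care is verifying that the $f$ produced in the necessary direction also meets the local integrability hypothesis of Theorem~\ref{Thm:integrability.conditions}. This is handled by truncation: since $\phi_{s_{0}}$ is bounded on $\{|z|\le K\}$ for every $K$, the failure of $v^{-1/p}\phi_{s_{0}}\in L^{p'}$ can be localized to the complement of a large ball, so one may take $h$ supported there and automatically obtain $\int_{|z|\le K}|f|\,dh<\infty$. Everything else is a mechanical application of Hölder's inequality and its converse, together with Theorem~\ref{Thm:integrability.conditions}.
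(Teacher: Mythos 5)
Your proof follows essentially the same route as the paper's: H\"older's inequality applied to the splitting $(|f|v^{1/p})(v^{-1/p}\phi_s)$ for sufficiency, and duality for necessity. Your ``standard converse of H\"older's inequality'' is precisely the Landau resonance theorem the paper invokes from \cite{BS}: the paper tests $v^{-1/p}\phi_s$ against an arbitrary $f\in L^{p}(\mathbb{H}_{n})$, sets $\overline{f}=fv^{-1/p}\in L^{p}(\mathbb{H}_{n},v)$, and uses the converse half of Theorem \ref{Thm:integrability.conditions}; your contrapositive is the same argument read backwards. So the substance matches.

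Two of your auxiliary claims are inaccurate, however. First, the set $\{|z|\le K\}$ is a cylinder, unbounded in the central variable, hence not compact; knowing that $\phi_s$ is bounded away from zero on compact sets does not give $v^{-1/p}\chi_{\{|z|\le K\}}\in L^{p'}(\mathbb{H}_{n})$ (indeed $\phi_s(z,t)\to 0$ as $|t|\to\infty$ with $|z|\le K$), so your H\"older argument for the hypothesis $\int_{|z|\le K}|f|\,dh<\infty$ does not close. Second, in the necessity direction the failure of $v^{-1/p}\phi_{s_0}\in L^{p'}(\mathbb{H}_{n})$ need not be localizable to the complement of a large ball: $v^{-1/p}$ may already fail to be $L^{p'}$ on a fixed compact set, in which case your truncation is unavailable (one can still conclude in that case, but by noting directly that the resulting compactly supported $f\in L^{p}(\mathbb{H}_{n},v)$ has a divergent heat integral, since $q_{s_0}(g\,\cdot^{-1})$ is bounded below on compacts, rather than by invoking Theorem \ref{Thm:integrability.conditions}). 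To be fair, the paper's own proof does not address the cylinder-integrability hypothesis of Theorem \ref{Thm:integrability.conditions} in either direction, so you are attempting to patch a point the paper leaves implicit; but the patches as written do not work and would need the case analysis above.
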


\par Finally, we prove a boundedness result for the local maximal operator defined by
\begin{align}\label{local.max.op}
Q_{a}^{\ast}f(g)= \sup\limits_{0<s<a} |e^{-s\mathcal{L}}f(g)|.
\end{align}

\begin{thm}\label{Thm:local.maximal.operator.weightedLp.boundedness}
Let $1<p<\infty$. If $v\in D_{p}(\mathcal{L},S)$, then for every $a\in(0,S)$ there exists a weight $u=u_a$ such that
\begin{align}\label{Qast.bd.p}
	Q^{\ast}_a:L^{p}(\mathbb{H}_n,v)\to L^{p}(\mathbb{H}_n,u)
\end{align}
is bounded. Moreover, if $q>p$, then $u$ can be chosen in $D_{q}(\mathcal{L},S)$.

Conversely, if \eqref{Qast.bd.p} holds for some $a\in(0,S)$ and some weight $u=u_a$, then $v\in D_{p}(\mathcal{L},S)$.
\end{thm}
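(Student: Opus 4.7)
My plan is to follow the blueprint of \cite{HTV2013}, using Corollary \ref{Cor:weight.characterization} to translate the $D_p$-condition into the integrability $v^{-1/p}\phi_s\in L^{p'}(\mathbb{H}_n)$, together with the pointwise upper estimate of the Heisenberg heat kernel $h_s(g,h)\le C\,\phi_s(h^{-1}g)$ that underlies Theorem \ref{Thm:integrability.conditions}.

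For the direct implication I would apply H\"older to the heat integral: for $f\in L^p(\mathbb{H}_n,v)$ and $0<s<a$,
\begin{equation*}
\bigl|e^{-s\mathcal{L}}f(g)\bigr|\le \|f\|_{L^p(v)}\Bigl(\int h_s(g,h)^{p'}\,v(h)^{-p'/p}\,dh\Bigr)^{1/p'}=:\|f\|_{L^p(v)}\,\Phi_s(g).
\end{equation*}
The hypothesis $v\in D_p(\mathcal{L},S)$ combined with the kernel bound ensures $\Phi_s(g)<\infty$ a.e., and hence $\Psi_a(g):=\sup_{0<s<a}\Phi_s(g)$ is finite a.e. The resulting pointwise estimate $Q^*_a f(g)\le \|f\|_{L^p(v)}\,\Psi_a(g)$ then immediately produces a valid output weight: setting $u:=\Psi_a^{-p}\omega$ for any positive $\omega\in L^1(\mathbb{H}_n)$ gives $\|Q_a^* f\|_{L^p(u)}^p\le \|f\|_{L^p(v)}^p\int\omega$, which is \eqref{Qast.bd.p}.

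The main obstacle is the refinement $u\in D_q(\mathcal{L},S)$ for $q>p$. By Corollary \ref{Cor:weight.characterization} this requires $u^{-1/q}\phi_s=\Psi_a^{p/q}\,\omega^{-1/q}\,\phi_s\in L^{q'}(\mathbb{H}_n)$ uniformly for $s\in(0,S)$. Since $q>p$ we have $p/q<1$ and $q'<p'$, leaving a margin to absorb the extra factors into $\omega$. I would choose $\omega$ as a product of a suitable power of $v^{-1/p}\phi_{s_0}$ for a fixed $s_0<a$, multiplied by a radial weight $(1+d(g))^{-N}$ with $N$ large, then estimate $\Psi_a$ by its defining supremum to reduce the condition to a family of inequalities of the form
\begin{equation*}
\int \phi_{s_0}^{\alpha}\,\phi_s^{\beta}\,v^{-\gamma}\,(1+d(g))^{-Nq'/q}\,dg<\infty,
\end{equation*}
which can be verified by exploiting the Gaussian decay embedded in $\phi$. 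The delicate issue is that uniformity in $s$ must be maintained all the way to $S>a$, not only for $s<a$; this will force splitting the $s$-range at $a$ and using the explicit structure of $\phi_s$ to trade the exponential factor against the polynomial prefactors.

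For the converse, assume \eqref{Qast.bd.p} holds for some $a\in(0,S)$. Then $Q_a^*f(g)<\infty$ almost everywhere for every $f\in L^p(\mathbb{H}_n,v)$; in particular the heat integral $e^{-s\mathcal{L}}f(g)$ is finite a.e.~for each $s\in(0,a)$. The converse direction of Theorem \ref{Thm:integrability.conditions} then forces $\int\phi_s|f|\,dg<\infty$ for every $s\in(0,a)$ and every $f\in L^p(v)$, which by duality is equivalent to $v^{-1/p}\phi_s\in L^{p'}(\mathbb{H}_n)$ for those $s$. To extend the range up to $S$, I would combine the semigroup identity $e^{-s\mathcal{L}}=e^{-(s-s_0)\mathcal{L}}e^{-s_0\mathcal{L}}$ for $s_0<a$ with the $L^p$-contractivity of the heat semigroup on $\mathbb{H}_n$, transferring the integrability from the small-time regime to arbitrary $s\in(0,S)$. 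Applying Corollary \ref{Cor:weight.characterization} once more concludes $v\in D_p(\mathcal{L},S)$.
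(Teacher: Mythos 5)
Your argument for the direct implication collapses at the supremum step, and this is the heart of the theorem. The H\"older bound $|e^{-s\mathcal{L}}f(g)|\le \|f\|_{L^p(v)}\Phi_s(g)$ is fine for each fixed $s$, but the claim that $\Psi_a(g)=\sup_{0<s<a}\Phi_s(g)$ is finite a.e.\ is false for essentially every admissible weight. By the scaling relation $q_{s}(h)=s^{-(n+1)}q_1(\delta_{s^{-1/2}}h)$ one has $q_s(gh^{-1})\ge c\,s^{-(n+1)}$ on the ball $B(g,c\sqrt{s})$, whose measure is comparable to $s^{n+1}$, so
\begin{equation*}
\Phi_s(g)^{p'}\;\ge\;c\,s^{-(n+1)p'}\int_{B(g,c\sqrt{s})}v(h)^{-p'/p}\,dh\;\gtrsim\;s^{-(n+1)(p'-1)}\,v(g)^{-p'/p}
\end{equation*}
for a.e.\ $g$ by Lebesgue differentiation; already for $v\equiv 1$ (which does belong to $D_p(\mathcal{L},S)$) one computes $\Phi_s(g)=\|q_s\|_{p'}\sim s^{-(n+1)/p}\to\infty$ as $s\to 0^{+}$. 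Hence $\Psi_a\equiv+\infty$, the majorant $\|f\|_{L^p(v)}\Psi_a(g)$ is vacuous, and no weight $u=\Psi_a^{-p}\omega$ exists. A single application of H\"older against the weight cannot control the small-time singularity of the kernel; this is exactly why the actual proof first splits the kernel via Lemma \ref{lem:bilateral.estimation.on.A_g} and Corollary \ref{cor:upper.estimation.on.B_g} into a far part and a near part, arriving at $Q_a^{\ast}f(g)\le C_1(1+\varphi_a(g))\int|f|\phi_a\,dh+C_2\,\mathcal{M}_Rf(g)$: only the global term is treated by H\"older and $v^{-1/p}\phi_a\in L^{p'}$, while the local term requires a separate two-weight theorem for the local Hardy--Littlewood maximal operator (Theorem \ref{thm:2.wgt}, proved via Rubio de Francia factorization, Kolmogorov's inequality and the Fefferman--Stein vector-valued inequality on spaces of homogeneous type), with the $D_q$-refinement tracked through both pieces in Corollary \ref{Cor:u.in.Dq}. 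You have therefore misidentified the main obstacle: it is not the choice of $\omega$ making $u\in D_q(\mathcal{L},S)$, but the absence of any usable pointwise majorant for the local part of $Q_a^{\ast}$.

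The first half of your converse is sound in outline: applying the hypothesis to $|f|$, using the converse direction of Theorem \ref{Thm:integrability.conditions} and Landau's resonance theorem gives $v^{-1/p}\phi_s\in L^{p'}(\mathbb{H}_n)$ for $s\in(0,a)$. The proposed bridge to $s\in(0,S)$, however, does not work as written: the identity $e^{-s\mathcal{L}}=e^{-(s-s_0)\mathcal{L}}e^{-s_0\mathcal{L}}$ would require controlling the evolution of $e^{-s_0\mathcal{L}}f$ in a weighted space, whereas contractivity of the heat semigroup holds on unweighted $L^p$; moreover the condition $v^{-1/p}\phi_s\in L^{p'}$ becomes \emph{more} restrictive as $s$ grows (the exponential factor decays more slowly), so small-time integrability does not propagate upward for free. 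That step needs a genuine argument rather than an appeal to the semigroup law.
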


\par In Section \ref{Sec:prelim} we describe our setting, in Section \ref{Sec:bricks} we prove some auxiliary lemmas, and in Section \ref{Sec:proofs.of.thms} we prove the main theorems.

\par Before continuing, let us briefly explain why we restrict our attention to the heat equation and do not consider the Poisson problem. In the Heisenberg setting, the Poisson semigroup is defined through the fractional powers of the sublaplacian via the subordination formula $e^{-s\mathcal{L}^{\frac12}}f$, and its analysis requires techniques that are substantially different from those used in the heat setting. Although several aspects of the Poisson problem are certainly interesting, treating them would considerably increase the scope and length of the paper. For this reason, we focus exclusively on the heat equation.

\section{Preliminaries}\label{Sec:prelim}

\par We work on the Heisenberg group $\mathbb{H}_{n}=\mathbb{C}^{n}\times\mathbb{R}$ endowed with the group law
$$(z,t)(z',t')=\left(z+z',t+t'+2\Im \langle z,\overline{z'} \rangle\right),$$
where $\langle\cdot,\cdot\rangle$ denotes the canonical Hermitian inner product on $\mathbb{C}^n$,
$$\langle z,\overline{z'} \rangle=\sum\limits_{j=1}^{n} z_{j}\overline{z'_{j}}.$$
The Heisenberg group is a real Lie group whose Haar measure coincides with the Lebesgue measure $dzdt$ on $\mathbb{C}^{n}\times\mathbb{R}$. When convenient, we will denote its elements by $g\in\mathbb{H}_{n}$, although for most computations we will use the coordinates $(z,t)$. The identity element is $(0,0)$ and the inverse of $(z,t)$ is $(-z,-t)$.

\par The Lie algebra $\mathfrak{h}_{n}$ has the canonical basis
$$\{X_{1},\dots,X_{n},Y_{1},\dots,Y_{n},T\},$$
where for $j=1,\dots,n$,
$$X_{j}= \frac{\partial}{\partial x_{j}} - 2 y_{j} \frac{\partial}{\partial t},\qquad
Y_{j}= \frac{\partial}{\partial y_{j}} + 2 x_{j} \frac{\partial}{\partial t},
\qquad \text{and}\qquad
T= \frac{\partial}{\partial t}.$$
The Lie bracket satisfies $[X_{j},Y_{j}]=4T$ for $j=1,\dots,n$, and therefore $T$ generates the center of the Lie algebra.

\par The Heisenberg sublaplacian is the second order left invariant differential operator
$$ \mathcal{L} = \sum\limits_{j=1}^{n} X_{j}^{2}+Y_{j}^{2},$$
which in coordinates can be written as
\begin{align*}
	\mathcal{L}=&  \sum\limits_{j=1}^{n} \left( \frac{\partial^{2}}{\partial x_{j}^{2}} +\frac{\partial^{2}}{\partial y_{j}^{2}}   \right)
	+ 4\sum\limits_{j=1}^{n}(x_{j}^{2}+y_{j}^{2})\frac{\partial^{2}}{\partial t^{2}} \\
	& + 4 \sum\limits_{j=1}^{n} \left( y_{j} \frac{\partial}{\partial x_{j}} - x_{j}\frac{\partial}{\partial y_{j}} \right)\frac{\partial}{\partial t} \\
	= & \Delta_{\mathbb{R}^{2n}} +4 |z|^{2}T^{2} + 4 \mathcal{R} T,
\end{align*}
where
$$ \mathcal{R}=  \sum\limits_{j=1}^{n} \left( y_{j} \frac{\partial}{\partial x_{j}} - x_{j}\frac{\partial}{\partial y_{j}} \right).$$
Although $\mathcal{L}$ is not elliptic, it is hypoelliptic by H\"ormander's theorem \cite{Ho1967} (see also \cite{Ka1980}) and satisfies subelliptic estimates \cite{FoKo1972}. For our purposes, $\mathcal{L}$ plays the role of the Laplacian in Euclidean space.

\par The Heisenberg group is an homogeneous Lie group with nonisotropic dilations
$$\delta_{r}(z,t)=(rz,r^2 t), \qquad r>0.$$
The operator $\mathcal{L}$ is, up to a multiplicative constant, the unique left invariant and rotation invariant differential operator homogeneous of degree $2$, in the sense that
$$\mathcal{L}(\delta_{r}f)=r^{2}\delta_{r}(\mathcal{L}f).$$
Moreover, $\mathcal{L}$ is formally self-adjoint and nonnegative. Therefore, the heat equation \eqref{heat.equation} with initial data \eqref{initial.data} generates a diffusion semigroup $e^{-s\mathcal{L}}$, and its solution is given by
$$u(z,t;s)=e^{-s\mathcal{L}}f(z,t).$$
See for example the monograph by Stein \cite{St1970}.

\par A theorem of Hunt \cite{Hu1956} guarantees the existence of a probability measure $\mu_{s}$, absolutely continuous with respect to the Haar measure, such that 
$$ e^{-s\mathcal{L}}f(z,t) = \int\limits_{\mathbb{H}_{n}}
f((z,t)(z',t')^{-1})\, d\mu_{s}(z',t').$$
If $q_{s}(z,t)$ denotes the density of $\mu_s$, then
$$ u(z,t;s) = e^{-s\mathcal{L}}f(z,t)=f\ast q_{s}(z,t),$$
where convolution on $\mathbb{H}_{n}$ is defined by
$$ f\ast q(z,t)= \int\limits_{\mathbb{H}_{n}} f(z',t')q((z,t)(z',t')^{-1})\, dz'dt'.$$

\par The kernel $q_{s}$ is a nonnegative function in $C^{\infty}(\mathbb{H}_{n}\times (0,\infty))$ satisfying
$$ \int_{\mathbb{H}_n} q_s(z,t)\,dzdt=1 $$
and the scaling relation
$$ q_{r^2s}(z,t) = \frac{1}{r^{2n+2}} q_{s}(\delta_{r^{-1}}(z,t)).$$
An explicit formula for $q_s$ can be obtained by taking the inverse Fourier transform in the central variable (see for example \cite{Th2012}):
\begin{equation}\label{explicit.heat.kernel}
	q_{s}(z,t) = \frac{1}{2(4\pi s)^{n+1}} \int\limits_{-\infty}^{\infty} \left( \frac{\lambda}{\sinh \lambda} \right)^{n} e^{-\frac{|z|^{2}}{4s}  \lambda \coth \lambda } e^{i \lambda \frac{t}{s}} d\lambda.
\end{equation}

\par From this formula one can prove that if $f\in L^{p}(\mathbb{H}_{n})$, with $1\le p<\infty$, then $f\ast q_{s}$ converges to $f$ in $L^{p}(\mathbb{H}_{n})$ as $s\to0$.

\par The integral formula defining the heat kernel $q_s$ is often difficult to handle and differs in several respects from the Euclidean case. For instance, $q_{s}(z,t)$ does not exhibit Gaussian decay in the central variable; see \cite{Th2012}, where this phenomenon appears in the study of Hardy's theorem on $\mathbb{H}_{n}$. Since in this work we are mainly interested in the behavior of the heat kernel, we recall some estimates that will be useful later on.

\par There is an extensive literature on heat kernel estimates, global bounds, asymptotic behavior, and related questions on the Heisenberg group. We mention in particular the influential work of Gaveau \cite{Ga1977}, from which much of the subsequent literature originates. Because our arguments require rather precise estimates, we will use the bounds obtained by Q. Li in \cite{Li2007}, later revisited by Q. Li and Zhang in \cite{LiZa2019}. In that work the authors present a detailed study of the heat kernel on isotropic and nonisotropic Heisenberg groups, including a careful description of its asymptotic behavior in several different regimes, together with many historical references on the subject.
 
 \par These estimates involve some geometric properties of the underlying space, which we now briefly recall. The sub-Riemannian structure on $\mathbb{H}_{n}$ induces the Carnot--Carath\'eodory distance $d$: for $g=(z,t)$ and $g'=(z',t')$ in $\mathbb{H}_n$, the quantity $d(g,g')$ is defined as the infimum of the lengths of horizontal curves joining $g$ and $g'$. If we denote by $d(g)$ the distance from $g$ to the origin, then by \cite[(1.19)]{LiZa2019},
 $$ d(z,t)^2 \sim |z|^2+|t|$$
 for every $(z,t)\in\mathbb{H}_{n}$.
 
 \par Finally, let us recall that the Heisenberg group is a space of homogeneous type in the sense of Coifman and Weiss (see \cite[Chapter III]{CoWe1977}). In particular, if $B(g,r)$ denotes the ball centered at $g\in\mathbb{H}_{n}$ with radius $r>0$ with respect to the Carnot-Carath\'eodory distance, then the Haar measure satisfies the doubling condition
 $$|B(g,2r)| \le C |B(g,r)| $$
 for some constant $C>0$ independent of $g$ and $r$. This allows us to use several tools from harmonic analysis on spaces of homogeneous type.
 
\section{Technical Results}\label{Sec:bricks}

\par Let us begin by recalling the heat kernel estimate. From Theorem 1 of \cite{Li2007}, if we define
\begin{equation}\label{candidate}
	\phi_{s}(z,t) = \frac{1}{s^{n-1}} e^{-\frac{d(z,t)^2}{4s}}
	\left(1+\frac{d(z,t)^{2}}{s}\right)^{n-1}
	\left( 1 + \frac{|z|d(z,t)}{s} \right)^{-n+\frac{1}{2}},
\end{equation}
for $s>0$ and $(z,t)\in\mathbb{H}_{n}$, then there exists a constant $A>1$ such that
\begin{equation}\label{est.phi.q}
	A^{-1} \phi_{s}(z,t) \le q_{s}(z,t) \le A \phi_{s}(z,t).
\end{equation}
 Observe that the function $\phi_s:\mathbb{H}_{n}\to\mathbb{R}$ is nonnegative and continuous with respect to the metric $d$.

\par Let us now establish some estimates for later use. Let $M>1$ and $g_{0}=(z_{0},t_{0})\in\mathbb{H}_{n}$. If $h=(z,t)\in\mathbb{H}_{n}$ satisfies $d(h)>Md(g_{0})$, then, since $d$ is a metric and satisfies the triangle inequality,
\begin{equation}\label{M.d.estimates}
	\left(\frac{M-1}{M} \right) d(h) 
		\le d(g_{0}h^{-1})
		\le \left( \frac{M+1}{M} \right) d(h).
\end{equation}
Similarly, whenever $|z|> M |z_{0}|$,
\begin{equation}\label{M.euclid.estimates}
	\left(\frac{M-1}{M} \right) |z|
		\le |z_{0}-z|
		\le \left( \frac{M+1}{M} \right) |z|.
\end{equation}

\begin{lem}\label{lem:bilateral.estimation}
Let $M>1$, $s>0$, and $g=(z_{0},t_{0})\in\mathbb{H}_{n}$. If $h=(z,t)\in \mathbb{H}_{n}$ satisfies
$$	d(h)>M d(g) \qquad \text{and} \qquad |z|>M|z_{0}|,$$
then the following bilateral estimate holds:
\begin{equation}\label{bilateral.estimation}
	A_1 \phi_{\sigma_1}(h)
		\le \phi_{s}(gh^{-1})
		\le A_2 \phi_{\sigma_2}(h),
\end{equation}
where
$$m_1:=\left(\frac{M}{M+1}\right)^2 < 1 < m_2:=\left(\frac{M}{M-1}\right)^2,$$
and
$$ C_M:=\frac{m_2}{m_1} = \left(\frac{M+1}{M-1}\right)^2.	$$
Moreover, if
$$\sigma_1:=m_1 s\qquad \text{and} \qquad \sigma_2:=m_2 s,$$
then
$$ \sigma_1<s<\sigma_2, $$
and
$$A_1:=\left(\frac{m_1}{C_M}\right)^{n-1}, \qquad 	A_2:=\left(m_2 C_M\right)^{n-1}.$$
\end{lem}

\begin{proof}
\par The estimate follows from \eqref{M.d.estimates} and \eqref{M.euclid.estimates} by comparing the different factors appearing in the definition of $\phi_s$.

\par Indeed, from \eqref{M.d.estimates} we obtain
$$ m_1 d(h)^2 \le d(gh^{-1})^2 \le m_2 d(h)^2, $$
which immediately gives
\begin{equation}\label{exp.estimate.full}
	e^{-\frac{d(h)^2}{4\sigma_1}}
	\le e^{-\frac{d(gh^{-1})^2}{4s}}
	\le e^{-\frac{d(h)^2}{4\sigma_2}}.
\end{equation}

\par Combining \eqref{M.d.estimates} and \eqref{M.euclid.estimates}, we also have
\begin{equation}\label{-n+1/2.estimate.full}
	\left ( 1 +\frac{|z|d(h)}{\sigma_1}\right)^{-n+\frac{1}{2}}
			\le \left ( 1 + \frac{|z-z_0|d(gh^{-1})}{s}\right)^{-n+\frac{1}{2}}
			\le \left ( 1 + \frac{|z|d(h)}{\sigma_2}\right)^{-n+\frac{1}{2}},
\end{equation}
and
\begin{equation}\label{n-1.estimate.full}
	\frac{1}{{C_M}^{n-1}} \left( 1 + \frac{d^2(h)}{\sigma_1} \right)^{n-1}
	\le \left( 1 + \frac{d^2(gh^{-1})}{s} \right)^{n-1}
	\le {C_M}^{n-1} \left( 1 + \frac{d^2(h)}{\sigma_2} \right)^{n-1}.
\end{equation}

\par The conclusion now follows by multiplying the previous estimates factor by factor in the definition of $\phi_s$.
\end{proof}

\par Since the function
$$\left ( 1 + \frac{|z|d(h)}{\sigma_1}\right)^{-n+\frac{1}{2}} $$
is uniformly bounded by $1$, we can also establish an upper estimate in the case where $|z|$ remains bounded while $d(h)$ is large.

\begin{cor}\label{cor:upper.estimation.on.B_g}
With the notation of Lemma \eqref{lem:bilateral.estimation}, if we assume instead that $|z|\le M|z_{0}|$, then
\begin{equation}\label{upper.estimation}
	\phi_s(g_{0}h^{-1})
	\le
	A_2 \phi_{\sigma_2}(h)\varphi_{\sigma_2}(g_{0}),
\end{equation}
where
$$\varphi_s(g_{0}) =M^2\left(1 + \frac{|z_0|d(g_{0})}{s}\right)^{n-\frac{1}{2}}.$$
\end{cor}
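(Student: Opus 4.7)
The plan is to revisit the factor-by-factor decomposition of $\phi_s(gh^{-1})$ used in Lemma~\ref{lem:bilateral.estimation.on.A_g}, keeping the estimates that only require $d(h)>Md(g)$ and handling separately the one that needed $|z|>M|z_0|$. Since the first hypothesis is still in force, the distance inequality \eqref{M.d.estimates} still holds, so the upper halves of \eqref{exp.estimate.full} and \eqref{n-1.estimate.full} apply verbatim to the exponential and to the $(1+d^2/s)^{n-1}$ piece of $\phi_s(gh^{-1})$. Combining them with the trivial rescaling $1/s^{n-1}=m_2^{n-1}/\sigma_2^{n-1}$ produces the very same constant $A_2=(m_2 C_M)^{n-1}$ in front of the ``distance part'' of $\phi_{\sigma_2}(h)$, that is, of $\phi_{\sigma_2}(h)$ stripped of its $(1+|z|d(h)/\sigma_2)^{-n+1/2}$ factor.

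For the remaining factor $\bigl(1+|z-z_0|d(gh^{-1})/s\bigr)^{-n+1/2}$ the Euclidean comparison \eqref{M.euclid.estimates} is not available under the hypothesis $|z|\le M|z_0|$, so \eqref{-n+1/2.estimate.full} cannot be used. Following the hint preceding the statement, I would exploit that $-n+1/2\le 0$ for $n\ge 1$ and that the base is $\ge 1$, so the whole factor is bounded above by $1$. Then I would multiply and divide by $(1+|z|d(h)/\sigma_2)^{-n+1/2}$: the first copy reconstructs the missing piece of $\phi_{\sigma_2}(h)$, while the second copy becomes a compensating factor $(1+|z|d(h)/\sigma_2)^{n-1/2}$. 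The whole problem is thus reduced to dominating this compensating factor by $\varphi_{\sigma_2}(g)$.

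This last step is where the assumption $|z|\le M|z_0|$ finally enters, and where I expect the main obstacle of the proof to lie. Substituting $|z|\le M|z_0|$ immediately gives $(1+|z|d(h)/\sigma_2)^{n-1/2}\le (1+M|z_0|d(h)/\sigma_2)^{n-1/2}$; the remaining bookkeeping is to package the overhead power of $M$ arising from the $(n-\tfrac12)$-power into the prefactor $M^2$ appearing in $\varphi_s$, and to control the $d(h)$ term on the left by the $d(g)$ term on the right using the geometric comparison $d(g)^2\sim |z_0|^2+|t_0|$ recalled in Section~\ref{Sec:prelim}. Once these routine manipulations are organized, substituting back up the chain yields exactly \eqref{upper.estimation}.
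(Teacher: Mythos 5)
Your decomposition is the same as the paper's: the $1/s^{n-1}$, the exponential and the $(1+d^2/s)^{n-1}$ factors are handled by the upper halves of \eqref{exp.estimate.full} and \eqref{n-1.estimate.full} (which only need $d(h)>Md(g)$), producing $A_2$ times the ``distance part'' of $\phi_{\sigma_2}(h)$, and the mixed factor is bounded by $1$ and then multiplied and divided by $\bigl(1+\tfrac{|z|d(h)}{\sigma_2}\bigr)^{-n+\frac12}$. The gap is in your last step, and it is not mere bookkeeping. After the multiply-and-divide you must dominate the compensating factor $\bigl(1+\tfrac{|z|d(h)}{\sigma_2}\bigr)^{n-\frac12}$ by $\varphi_{\sigma_2}(g)$, a quantity independent of $h$; this amounts to $|z|\,d(h)\lesssim_M |z_0|\,d(g)$. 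The hypothesis $|z|\le M|z_0|$ controls the $|z|$ part, but the standing hypothesis on distances is $d(h)>Md(g)$, which points in the \emph{opposite} direction: $d(h)$ is unbounded on the region in question while $d(g)$ is fixed. Your proposed fix --- controlling ``the $d(h)$ term on the left by the $d(g)$ term on the right'' via $d(g)^2\sim|z_0|^2+|t_0|$ --- cannot work, since that equivalence involves only the coordinates of $g$ and says nothing about $d(h)$. Concretely, take $z=z_0\ne 0$ and let $d(h)\to\infty$: your compensating factor blows up while $\varphi_{\sigma_2}(g)$ stays put, so the required pointwise domination of the polynomial factors is false.

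For what it is worth, the paper's own proof stalls at exactly the same point: its displayed ``multiplication by $1$'' replaces $\bigl(1+\tfrac{M|z|d(g)}{\sigma_2}\bigr)^{-n+\frac12}$ by $\bigl(1+\tfrac{|z|d(h)}{\sigma_2}\bigr)^{-n+\frac12}$ citing $d(h)>Md(g)$, but since the exponent is negative that replacement actually requires $M|z|d(g)\ge |z|d(h)$, i.e.\ $d(h)\le Md(g)$. So you have faithfully reproduced the argument, impasse included. Any correct proof of \eqref{upper.estimation} (possibly with a different constant) must use the information that both you and the paper discard, namely the strict exponential gain: for $d(h)>Md(g)$ one has $d(gh^{-1})^2/s - d(h)^2/\sigma_2 \ge \bigl[(d(h)-d(g))^2-\bigl(\tfrac{M-1}{M}\bigr)^2 d(h)^2\bigr]/s$, which is nonnegative and grows like $d(h)^2/s$ once $d(h)\gg Md(g)$; it is this surplus that must absorb the polynomially growing factor $\bigl(1+\tfrac{|z|d(h)}{\sigma_2}\bigr)^{n-\frac12}$, and making that absorption precise (or weakening the stated estimate) is the missing content of the proof.
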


\begin{proof}
\par By Lemma \eqref{lem:bilateral.estimation}, the only factor requiring a different treatment is
$$ \left ( 1 + \frac{|z-z_0|d(g_{0}h^{-1})}{s}\right)^{-n+\frac{1}{2}}, $$
which is bounded above by $1$.

\par In order to recover the factor appearing in $\phi_{\sigma_2}(h)$, we write
\begin{align*}
	1 	=& 	\left( 1 + \frac{M|z|d(g_{0})}{\sigma_2}\right)^{-n+\frac{1}{2}} 	\left( 1 + \frac{M|z|d(g_{0})}{\sigma_2}\right)^{n-\frac{1}{2}} \\
	\le & 	\left( 1 + \frac{|z|d(h)}{\sigma_2}\right)^{-n+\frac{1}{2}} 	\left( 1 + \frac{M^2|z_0|d(g_{0})}{\sigma_2}\right)^{n-\frac{1}{2}},
\end{align*}
since $d(h)>Md(g_{0})$ and $|z|\le M|z_{0}|$. The conclusion follows.
\end{proof}

For the next results, we consider the function $\phi_{s}$ as an integrability factor to characterize the solutions for the IVP \eqref{heat.equation}-\eqref{initial.data} associated with an initial data $f$. As neither the heat kernel nor the function $\phi_{s}$ decays as a Gaussian in the $t$ variable, we need a local integrability assumption on the initial data.

\begin{prop}\label{Prop:integrability.conditions}
Let $f:\mathbb{H}_{n}\to\mathbb{R}$ be a measurable function such that for every $K>0$, the integral $\int_{|z|\le K, |t|\le K} |f(h)| dh$ is finite. Let $S>0$ be fixed. Then the following statements are equivalent:
\begin{enumerate}
\item[(i)] For every $s\in (0,S)$ and every $g\in \mathbb{H}_{n}$,
\begin{equation*}
	\int\limits_{\mathbb{H}_{n}} q_{s}(gh^{-1}) |f(h)| dh < \infty.
\end{equation*}
\item[(ii)] For every $s\in (0,S)$ and some $g_{s}\in \mathbb{H}_{n}$,
\begin{equation*}
	\int\limits_{\mathbb{H}_{n}} q_{s}(g_{s}h^{-1}) |f(h)| dh < \infty.
\end{equation*}
\item[(iii)] For every $s\in (0,S)$,
\begin{equation*}
	\int\limits_{\mathbb{H}_{n}} \phi_{s}(h) |f(h)| dh < \infty,
\end{equation*}
where the integrability factor $\phi_{s}$ is the function defined in \eqref{candidate}.
\end{enumerate}
\end{prop}

\begin{proof}
The implication (i) $\implies$ (ii) is immediate. Let us show that (ii) implies (iii). Fix $s\in (0,S)$. Since $s < S$, we can choose $M>1$ sufficiently large so that $\sigma:=s\left(\frac{M+1}{M}\right)^2 < S$. This choice implies $s=\sigma \left( \frac{M}{M+1} \right)^2$, allowing us to apply Lemma \ref{lem:bilateral.estimation}. Specifically, for $d(h)>Md(g)$ and $|z|> M |z_0|$, we have
\begin{align}\label{est.I.1.1}
	\phi_{s}(h) \le \frac{1}{A_1} \phi_{\sigma}(gh^{-1}).
\end{align}
By hypothesis (ii) applied to this $\sigma$, there exists a point $g_{\sigma}=(z_{\sigma},t_{\sigma})\in\mathbb{H}_{n}$ such that
\begin{align}\label{est.I.1.2}
\int\limits_{\mathbb{H}_{n}} q_{\sigma}(g_{\sigma}h^{-1}) |f(h)| dh < \infty.
\end{align}
We split the integral into two regions as follows:
\begin{align*}
	\int\limits_{\mathbb{H}_{n}} \phi_{s}(h) |f(h)| dh &=  \int\limits_{\mathcal{A}_{g_{\sigma}}} \phi_{s}(h) |f(h)| dh + \int\limits_{(\mathcal{A}_{g_{\sigma}})^{C} } \phi_{s}(h) |f(h)| dh =: I_1 + I_2,
\end{align*}
where $\mathcal{A}_{g_{\sigma}} = \{ h=(z,t)\in\mathbb{H}_{n}: d(h)>Md(g_{\sigma}), |z|> M |z_{\sigma}| \}$.

 Combining \eqref{est.I.1.1}, \eqref{est.I.1.2}, and the equivalence \eqref{est.phi.q}, it immediately follows that $I_1$ is finite.
	
To analyze $I_2$, we split the complement $(\mathcal{A}_{g_{\sigma}})^{C}$ into the compact region $\mathcal{C}_{g_{\sigma}} = \{ h\in\mathbb{H}_{n} : d(h)\le M d(g_{\sigma}) \}$ and the unbounded region $\mathcal{B}_{g_{\sigma}}=\{ h=(z,t)\in\mathbb{H}_{n} : d(h)>Md(g_{\sigma}), |z|\le M |z_{\sigma}| \}$. On the compact set $\mathcal{C}_{g_{\sigma}}$, the continuity of $\phi_{s}$ together with the local integrability of $f$ guarantees that $\int_{\mathcal{C}_{g_{\sigma}}} \phi_s |f| dh < \infty$. 
	
The region $\mathcal{B}_{g_{\sigma}}$ presents an extra difficulty since $|z|$ is bounded but $t$ (and thus also $d(h)$) can be arbitrarily large. However, for $h \in \mathcal{B}_{g_{\sigma}}$, the definition of $\phi_s$ yields the estimate
 $$ \phi_s(h) \le C e^{-\frac{d(h)^2}{4s}} d(h)^{2n-2} \le C' e^{-\frac{|t|}{4s}} (1+|t|)^{n-1},$$
where we used that $d(h)^2 = \sqrt{|z|^4+t^2} \ge |t|$ and that $|z|$ is bounded by $M|z_\sigma|$. Splitting the vertical direction into intervals $J_k = \{t \in \mathbb{R} : k \le |t| < k+1\}$, the integral over $\mathcal{B}_{g_{\sigma}}$ can be bounded by a convergent series:
$$ \int\limits_{\mathcal{B}_{g_{\sigma}}} \phi_{s}(h)|f(h)| dh \le \sum_{k=0}^\infty C_k \int_{|z|\le M|z_\sigma|, |t|\in J_k} |f(z,t)| dzdt,$$
where $C_k \sim e^{-k/4s} k^{n-1}$. From the assumption on $f$ and the convergence of the series, we obtain that $I_2$ is finite, thus establishing (iii).

To complete the proof, it remains to show that (iii) implies (i). Fix $g \in \mathbb{H}_n$ and $s \in (0,S)$. We can choose $\sigma \in (s, S)$ and $M>1$ such that, by the alternative inequality in Lemma \eqref{bilateral.estimation}, we have $q_s(gh^{-1}) \le C \phi_\sigma(h)$ for $h$ far from $g$. Then, we can split the integral of $q_s(gh^{-1})|f(h)|$ into a compact set and an unbounded region exactly as before. The integral over the compact set is finite by the local integrability of $f$, while the integral over the remaining region is bounded by $\int_{\mathbb{H}_n} \phi_\sigma |f| dh$, which is finite by (iii). This shows that (i) holds and completes the proof.
\end{proof}

\begin{obs}\label{rmk:well-def}
 Under the conditions of Proposition \ref{Prop:integrability.conditions}, for each $s\in(0,S)$, the convolution $$(q_{s}\ast f)(g)=\int\limits_{\mathbb{H}_{n}}q_s(gh^{-1})f(h)dh$$ is well defined as an absolutely convergent integral for $g\in\mathbb{H}_{n}$.
\end{obs}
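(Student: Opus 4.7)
The plan is to observe that this Remark is an immediate consequence of Proposition \ref{Prop:integrability.conditions}. The ``conditions of the previous Proposition'' consist of the local integrability assumption $\int_{|z|\le K} |f(h)|\,dh < \infty$ for every $K > 0$, together with one (hence all) of the equivalent integral conditions (i)--(iii). In particular, since the natural integrability factor condition one wants to impose on the initial data is (iii), namely $\int_{\mathbb{H}_n} \phi_s(h)|f(h)|\,dh < \infty$ for every $s \in (0,S)$, the equivalence proved in the Proposition delivers condition (i) automatically.

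By condition (i) applied at the given point $g$ and parameter $s$,
\begin{equation*}
\int_{\mathbb{H}_n} q_s(gh^{-1}) |f(h)|\,dh < \infty
\end{equation*}
for every $s \in (0,S)$ and every $g \in \mathbb{H}_n$. Since $q_s$ is a nonnegative $C^{\infty}$ function on $\mathbb{H}_n\times(0,\infty)$ and $f$ is measurable, the integrand $h \mapsto q_s(gh^{-1})f(h)$ is measurable, and finiteness of the integral of its absolute value is by definition the absolute convergence of $\int_{\mathbb{H}_n} q_s(gh^{-1}) f(h)\,dh$. Thus the convolution $(q_s \ast f)(g)$ is well defined as an absolutely convergent integral for every $g \in \mathbb{H}_n$ and every $s \in (0,S)$, as claimed.

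No real obstacle arises here: all of the substantive work lies in Proposition \ref{Prop:integrability.conditions} and in the bilateral kernel estimate \eqref{est.phi.q}, which together allow one to transfer integrability against the implicit kernel $q_s$ to integrability against the explicit factor $\phi_s$. The Remark is simply the payoff of that transfer reformulated in the language of convolutions, and it is this reformulation that will be used in Section \ref{Sec:proofs.of.thms} when verifying the regularity of $u(g,s)=(q_s\ast f)(g)$ and the a.e.\ convergence $u(g,s)\to f(g)$ as $s\to 0^+$.
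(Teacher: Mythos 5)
Your proof is correct and matches the paper's (implicit) reasoning exactly: the Remark is stated without separate proof precisely because condition (i) of Proposition \ref{Prop:integrability.conditions} is, by definition, the absolute convergence of the convolution integral at every $g$ and every $s\in(0,S)$, and the equivalence (i)$\Leftrightarrow$(iii) is what makes the hypothesis usable. Nothing further is needed.
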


The next proposition justifies that the convolution from Remark \ref{rmk:well-def} is a $C^\infty$ function. Even though we know that the heat kernel is $C^\infty$, the initial data $f$ does not need to belong to $L^{1}(\mathbb{H}_{n})$ (we only ask $f$ to satisfy the hypothesis of the Proposition), hence the need for a proof. For this, we resort again to the work of Q. Li \cite{Li2007}. More precisely, let us recall Theorem 2 about the derivatives of the heat kernel: for $\alpha_{j} \in\mathbb{N}$, $U_{j}\in \{X_{1},\dots,X_{n},Y_{1},\dots,Y_{n}\}$ with $1\le j\le k$, $k\in\mathbb{N}_{0}$, there exists a constant $C>0$ such that
\begin{align}\label{derivatives.estimates}
	|U_{1}^{\alpha_{1}}\cdots U_{k}^{\alpha_k}q_{s}(g)| \le & C \left(1+\frac{|z|}{\sqrt{s}}\right)^{|\alpha|}q_{s}(g),
\end{align}
for any $g=(z,t)\in\mathbb{H}_{n}$, where $|\alpha| = \alpha_{1}+\dots+\alpha_{k}$.

\begin{prop}\label{Prop:conv.is.C.infty} 
	If a measurable function $f$ satisfies the hypothesis from Proposition \ref{Prop:integrability.conditions} as well as any of its conditions, then
	\begin{align*}
		u(g,s) = & \int\limits_{\mathbb{H}_{n}} q_{s}(gh^{-1})f(h)dh \in C^{\infty}(\mathbb{H}_{n}\times (0,S)).
	\end{align*}
\end{prop}

\begin{proof}
We need to prove that 
\begin{align}\label{derivatives.conv} 
	\int\limits_{\mathbb{H}_{n}} |Dq_{s}(gh^{-1}) f(h)| dh < \infty 
\end{align} 
for every $s\in(0,S)$, every $g\in\mathbb{H}_{n}$ and every differential operator $D$.
	
Any differential operator $D$ can be regarded as a polynomial in the left-invariant horizontal differential operators $\{X_{1},\dots,X_{n},Y_{1},\dots,Y_{n}\}$ with $C^\infty$ coefficients. Indeed, since $q_s$ satisfies the heat equation, any time derivative $\frac{\partial}{\partial s}$ can be replaced by the sub-Laplacian $\mathcal{L}$. Also, the central operator $T$ can be written in terms of the horizontal fields via the commutation relation $T = \frac{1}{4}(X_1 Y_1 - Y_1 X_1)$. Since any standard coordinate derivative can be expressed using horizontal fields and $T$ (for instance, $\frac{\partial}{\partial x_{1}} = X_{1} + 2 y_{1} T$), linearity allows us to restrict our attention to monomials in the horizontal fields.
	
For $D=U_{1}^{\alpha_{1}}\cdots U_{k}^{\alpha_k}$, estimates \eqref{derivatives.estimates} and \eqref{est.phi.q} yield
\begin{align}\label{deriv.ineq}
	|Dq_{s}(h)| \le & C \left(1+\frac{|z|}{\sqrt{s}}\right)^{|\alpha|}|q_{s}(h)|  \le  C \left(1+\frac{|z|}{\sqrt{s}}\right)^{|\alpha|}|\phi_{s}(h)| 
\end{align}
for every $h = (z,t) \in\mathbb{H}_{n}$ and $s\in (0,S)$.
	
Given $s\in (0,S)$, choose $M>1$ and $\sigma>0$ such that $s = \left( \frac{M-1}{M} \right)^{2}\sigma < S$, and let $g_{\sigma} = (z_{\sigma},t_\sigma)$. We split the integral \eqref{derivatives.conv} over the three sets $\mathcal{A}_{g_{\sigma}}$, $\mathcal{B}_{g_{\sigma}}$ and $\mathcal{C}_{g_{\sigma}}$ defined in the proof of Proposition \ref{Prop:integrability.conditions}.
	
For $h = (z,t) \in \mathcal{A}_{g_{\sigma}}$, the estimates \eqref{M.euclid.estimates} and \eqref{bilateral.estimation} hold, which leads to
$$\int\limits_{\mathcal{A}_{g_{\sigma}}} |Dq_{s}(g_{\sigma}h^{-1}) f(h)| dh \le C(M,|\alpha|) \int\limits_{\mathcal{A}_{g_{\sigma}}} \left( 1 + \frac{|z|}{\sqrt{\sigma}} \right)^{|\alpha|} |\phi_{\sigma}(h)| |f(h)| dh.$$
The auxiliary function $\overline{f}(h) = \left( 1 + \frac{|z|}{\sqrt{\sigma}} \right)^{|\alpha|} f(h)$ is easily seen to satisfy the conditions of Proposition \ref{Prop:integrability.conditions}, meaning this integral is finite. 

Over the sets $\mathcal{B}_{g_{\sigma}}$ and $\mathcal{C}_{g_{\sigma}}$, the factor $\left( 1 + \frac{|z|}{\sqrt{\sigma}} \right)^{|\alpha|}$ is bounded by a constant since $|z|$ is bounded. On the compact set $\mathcal{C}_{g_{\sigma}}$, $\phi_{s}$ is bounded and $f$ is locally integrable, which ensures the integral is finite. Over $\mathcal{B}_{g_{\sigma}}$, the polynomial factor is bounded, and the integral converges by the same argument of decay in the $t$ variable used in Proposition \ref{Prop:integrability.conditions}. Finally, it follows that \eqref{derivatives.conv} holds.
\end{proof}

\begin{obs}\label{obs.low.cont}
	A nonnegative function $f$ satisfying $e^{-s\mathcal{L}}f(g)<\infty$ for some $g\in\mathbb{H}_{n}$ and some $s>0$ is locally integrable.
\end{obs}

Indeed, if $\mathcal{C}$ is a compact set in $\mathbb{H}_{n}$, the function $h\to q_{s}(gh^{-1})$ is continuous and strictly positive everywhere. Hence, it is bounded from below on $\mathcal{C}$ by a positive constant $c=c(g,s,\mathcal{C})>0$, and we have
\begin{align*}
	\int\limits_{\mathcal{C}} f(h) dh \le \frac{1}{c} \int\limits_{\mathcal{C}} q_{s}(gh^{-1}) f(h) dh \le \frac{1}{c} \int\limits_{\mathbb{H}_{n}} q_{s}(gh^{-1}) f(h) dh < \infty.
\end{align*}


The following proposition guarantees that, under our minimal integrability conditions, the solution recovers the initial data $f$ almost everywhere as time approaches zero.

\begin{prop}\label{Prop:limit.holds.ae} 
If a measurable function $f$ satisfies the hypothesis from Proposition \ref{Prop:integrability.conditions} as well as any of its conditions, then
\begin{equation}\label{existence.lim.ae}
	\lim\limits_{s\to 0^{+}}  \left(e^{-s\mathcal{L}}f\right)(g) = f(g)  \quad \text{ a.e. } g\in\mathbb{H}_{n}.
\end{equation}
\end{prop}

\begin{proof}
It is enough to show that the desired limit holds a.e. $g\in\mathbb{H}_{n}$ such that $d(g)\le k$ for each $k\in\mathbb{N}$. Let us fix $k\in\mathbb{N}$ and split the initial data $f$ as $$f(h) = f(h)\chi_{\{d(h)\le 2k\}}(h) + f(h)\chi_{\{d(h)> 2k\}}(h) =: f_{1}(h) + f_{2}(h).$$
	
Let us see first that $\lim\limits_{s\to 0^{+}} \left(e^{-s\mathcal{L}}f_{2}\right)(g) = 0$ for every $g\in\mathbb{H}_{n}$ such that $d(g)\le k$. Fix such a point $g$ and let $s>0$. By estimate \eqref{est.phi.q} we have
$$ |\left(e^{-s\mathcal{L}}f_{2}\right)(g)| \le \int\limits_{\mathbb{H}_{n}} q_{s}(gh^{-1}) |f_{2}(h)| dh \le C \int\limits_{d(h)>2k} \phi_{s}(gh^{-1}) |f(h)| dh. $$
	
For $d(h)>2k$ and $d(g)\le k$, the triangle inequality implies $d(gh^{-1}) \ge \frac{d(h)}{2}$. Thus, we can bound the exponential factor as
$$ e^{-\frac{d(gh^{-1})^{2}}{4s}} \le e^{-\frac{d(h)^{2}}{16s}} = e^{-\frac{d(h)^{2}}{32s}} e^{-\frac{d(h)^{2}}{32s}} \le e^{-\frac{k^{2}}{8s}} e^{-\frac{d(h)^{2}}{32s}}. $$
	
Since $d(gh^{-1})\le \frac{3}{2}d(h)$, the polynomial term satisfies
$$ \left( {1 + \frac{d(gh^{-1})^{2}}{s}} \right)^{n-1} \le \left( {1 + 9\frac{d(h)^{2}}{4s}} \right)^{n-1} = \left( {1 + 72\frac{d(h)^{2}}{32s}} \right)^{n-1}. $$
	
Regarding the horizontal variable, if $|z|>2k$, then $|z-z_{0}| \ge \frac{|z|}{2}$ since $|z_0| \le k$. This gives us
$$ \left( {1 + \frac{|z-z_{0}|d(gh^{-1})}{s} } \right)^{-n+\frac{1}{2}} \le \left( {1 + \frac{|z|d(h)}{4s} } \right)^{-n+\frac{1}{2}}.$$

Combining these inequalities, for $h$ in the support of $f_2$ we obtain
$$ \phi_{s}(gh^{-1}) \le C e^{-\frac{k^{2}}{8s}} \phi_{8s}(h). $$

For $s$ sufficiently small, we can bound $\phi_{8s}(h) \le C \phi_{S}(h)$. Since $\int_{\mathbb{H}_n} \phi_S(h)|f(h)|dh < \infty$ by condition (iii) of Proposition \ref{Prop:integrability.conditions}, and $e^{-\frac{k^{2}}{8s}} \to 0$ as $s \to 0^{+}$, it follows that the integral tends to $0$. The case $|z| \le 2k$ follows identically since the horizontal factor is bounded by $1$. Thus, the limit for $f_2$ vanishes.

On the other hand, since $f$ is locally integrable, $f_{1}$ vanishes outside a compact set and belongs to $L^{1}(\mathbb{H}_{n})$. Applying the standard almost everywhere convergence theorem for $L^1$ functions, we deduce
$$ \lim\limits_{s\to 0^{+}}  \left(e^{-s\mathcal{L}}f_ {1}\right)(g) = f(g) \quad \text{ a.e. } g\in\mathbb{H}_{n} \text{ such that } d(g)\le k. $$
This completes the proof.
\end{proof}

\section{Proofs of the Main Results}\label{Sec:proofs.of.thms}

Theorem \ref{Thm:integrability.conditions} follows immediately from Propositions \ref{Prop:integrability.conditions}, \ref{Prop:conv.is.C.infty} and \ref{Prop:limit.holds.ae}.

\begin{proof}[Proof of Corollary \ref{Cor:weight.characterization}]
If a weight $v$ satisfies $v^{-\frac{1}{p}}\phi_{s}\in L^{p'}(\mathbb{H}_{n})$ for every $s\in (0,S)$, then to guarantee that $v$ belongs to the class $D_{p}(\mathcal{L},S)$ we take a function $f\in L^{p}(\mathbb{H}_{n},v)$ and check that the conditions of Theorem \ref{Thm:integrability.conditions} hold. This is achieved by showing that $f$ satisfies the integrability condition against $\phi_s$. Indeed,
\begin{align*}
	\int\limits_{\mathbb{H}_{n}} f(g)\phi_{s}(g)dg = &  \int\limits_{\mathbb{H}_{n}} f(g)v^{\frac{1}{p}}(g)v^{-\frac{1}{p}}(g)\phi_{s}(g)dg \\
	\le & \left(  \int\limits_{\mathbb{H}_{n}} f(g)^pv(g)dg  \right)^{\frac{1}{p}} \left(  \int\limits_{\mathbb{H}_{n}} \left(v^{-\frac{1}{p}}(g)\phi_{s}(g)  \right)^{p'}dg  \right)^{\frac{1}{p'}},
\end{align*}
which is finite by the assumptions on $f$ and $v$. 

Conversely, we use a duality argument based on Landau's resonance Theorem (see \cite{BS}, Lemma 2.6, page 10). Given $s\in(0,S)$ and $v\in D_{p}(\mathcal{L},S)$, to show that $v^{-\frac{1}{p}}\phi_{s}\in L^{p'}(\mathbb{H}_{n})$ it suffices to check that $\int\limits_{\mathbb{H}_{n}} f v^{-\frac{1}{p}}\phi_{s} dg < \infty$ for any nonnegative $f\in L^p(\mathbb{H}_{n})$. Let $f\in L^p(\mathbb{H}_{n})$ with $f \ge 0$ and define $\overline{f}(g)=f(g) v^{-\frac{1}{p}}(g).$ Then $\overline{f}\in L^p(\mathbb{H}_{n},v)$ and 
$$ \int\limits_{\mathbb{H}_{n}} f v^{-\frac{1}{p}}\phi_{s} dg = \int\limits_{\mathbb{H}_{n}} \overline{f}\phi_{s} dg, $$
which is finite since $v$ belongs to the class $D_{p}(\mathcal{L},S)$.
\end{proof}

The proof of Theorem \ref{Thm:local.maximal.operator.weightedLp.boundedness} follows the lines of the existing literature for related settings. We adapt the required estimates to our context.

We use Lemma \ref{bilateral.estimation} and its Corollary \ref{cor:upper.estimation.on.B_g} to obtain the following estimation:
\begin{equation}\label{big.estimation}
	\phi_{s}(gh^{-1}) \le A_2 \phi_{\sigma_2}(h) (\chi_{\mathcal{A}_{g}}(h) + \varphi_{\sigma_2}(g) \chi_{\mathcal{B}_{g}}(h)) + \sup\limits_{h\in B(g,Md(g))} \phi_s(h),
\end{equation}
where $M>1$ is chosen so that $\sigma_2$ lies in the required interval. From \eqref{big.estimation} and the behavior of the heat kernel \eqref{est.phi.q}, it follows that for $a<S$ there exist constants $C_1,C_2>0$ such that
\begin{equation}\label{Qast.bound}
	Q_a^{\ast}f(g) = \sup\limits_{0<s<a} |e^{-s\mathcal{L}}f(g)| \le C_1 (1+\varphi_a(g)) \int\limits_{\mathbb{H}_n} |f(h)| \phi_a(h) dh + C_2 \mathcal{M}_R f(g),
\end{equation}
where $\mathcal{M}_R f(g)$ denotes the centered local Hardy-Littlewood maximal operator defined for $R>0$ by
\begin{equation}
	\mathcal{M}_R f(g) = \sup\limits_{0<r<R} \frac{1}{|B(g,r)|} \int\limits_{B(g,r)} |f(h)|dh.
\end{equation}
The notation $|E|$ for a measurable set $E\subset\mathbb{H}_n$ denotes its volume: $|E| = \int_{\mathbb{H}_n} \chi_E(h) dh$. We use the Carnot-Carathéodory distance to define the balls $B(g,r)$, hence the operator $\mathcal{M}_R$ depends on this definition. Using another distance, such as the Korányi metric, would lead to an equivalent operator, and both are treated similarly in the literature.

The $L^p(\mathbb{H}_n)$ boundedness of the nonlocal Hardy-Littlewood maximal operator follows from the general theory of spaces of homogeneous type. The next Theorem provides the exact two-weight estimate needed for our purpose.

\begin{thm}\label{thm:2.wgt}
Consider $1<p<\infty$ and a weight function $v$ such that $v^{-\frac{1}{p}}\in L^{p'}_{loc}(\mathbb{H}_n)$. Then there exists a weight function $u$ such that 
$$ \mathcal{M}_R : L^p(\mathbb{H}_n,v) \to L^p(\mathbb{H}_n,u) $$
boundedly.
\end{thm}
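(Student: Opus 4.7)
The plan is to adapt the classical two-weight strategy for the Hardy--Littlewood maximal operator (as carried out in \cite{HTV2013} and related literature) to the Heisenberg setting, taking as input the unweighted $L^p$-boundedness of $\mathcal{M}_R$ established by Zienkiewicz and H.Q.~Li. The starting point is a pointwise H\"older inequality: for any ball $B(g,r)\subset \mathbb{H}_n$ with $r<R$ and any $f\in L^p(\mathbb{H}_n,v)$, writing $|f|=(|f|v^{1/p})\cdot v^{-1/p}$ and applying H\"older with exponents $p,p'$ gives
\begin{equation*}
\frac{1}{|B(g,r)|}\int_{B(g,r)}|f|\,dh \le \left(\frac{1}{|B(g,r)|}\int_{B(g,r)}|f|^p v\,dh\right)^{1/p}\left(\frac{1}{|B(g,r)|}\int_{B(g,r)}\sigma\,dh\right)^{1/p'},
\end{equation*}
where $\sigma := v^{-p'/p}$ is locally integrable by hypothesis. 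Taking the supremum over $r<R$ produces the key pointwise bound $[\mathcal{M}_R f(g)]^p \le [\mathcal{M}_R\sigma(g)]^{p-1}\,\mathcal{M}_R(|f|^p v)(g)$.

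Next I would define $u$ by
\begin{equation*}
u(g):=\frac{\Psi(g)}{[\mathcal{M}_R\sigma(g)]^{p-1}},
\end{equation*}
with $\Psi$ a positive function of sufficiently rapid decay (for instance $\Psi(g)=(1+d(g))^{-\alpha}$ with $\alpha$ large) chosen so that $\mathcal{M}_R\Psi$ is bounded. Integrating the pointwise inequality against $u$ yields
\begin{equation*}
\int_{\mathbb{H}_n}(\mathcal{M}_R f)^p u\,dg \le \int_{\mathbb{H}_n}\mathcal{M}_R(|f|^p v)(g)\,\Psi(g)\,dg,
\end{equation*}
so the task reduces to bounding the right-hand side by $C\,\|f\|_{L^p(\mathbb{H}_n,v)}^p$. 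I would accomplish this via a Fefferman--Stein-type weighted inequality on the space of homogeneous type $\mathbb{H}_n$: decomposing the space into dyadic annuli, applying the unweighted $L^p$-boundedness of $\mathcal{M}_R$ on each annulus (with the relevant supports localized through the doubling of Haar measure), and summing using the decay of $\Psi$.

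The principal obstacle is this last step: since $|f|^p v$ lies in $L^1(\mathbb{H}_n)$ but generally not in $L^p$, the strong $L^p$-boundedness of $\mathcal{M}_R$ cannot be invoked on it directly. The delicate point is to balance the weak $(1,1)$ estimate for $\mathcal{M}_R$ against the decay of $\Psi$, using a Marcinkiewicz-type interpolation together with the local $A_1$-behavior of $\Psi$, so as to obtain a genuine weighted $L^p$ bound. All remaining ingredients---the doubling property of the Haar measure, Lebesgue differentiation, and the local maximal estimates---are standard on $\mathbb{H}_n$ and feed routinely into the argument.
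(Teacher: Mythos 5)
Your opening steps are fine: the pointwise H\"older factorization $[\mathcal{M}_R f(g)]^p \le [\mathcal{M}_R\sigma(g)]^{p-1}\,\mathcal{M}_R(|f|^p v)(g)$ with $\sigma=v^{-p'/p}$ is correct, and $u=\Psi/[\mathcal{M}_R\sigma]^{p-1}$ is a legitimate candidate weight (finite and positive a.e.\ since $\sigma\in L^1_{loc}$). The gap is exactly where you locate it, but it is not a ``delicate point'' that interpolation can repair --- it is a dead end. Your reduction requires
\begin{equation*}
\int_{\mathbb{H}_n}\mathcal{M}_R(F)(g)\,\Psi(g)\,dg\le C\,\|F\|_{L^1(\mathbb{H}_n)}
\end{equation*}
for every nonnegative $F=|f|^p v$; and as $f$ ranges over $L^p(\mathbb{H}_n,v)$ these $F$ exhaust all of nonnegative $L^1$. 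No a.e.-positive $\Psi$ can satisfy such a strong-type $(1,1)$ bound: for $F\in L^1$ the function $\mathcal{M}_R F$ need not be locally integrable (the analogue on $\mathbb{H}_n$ of $F(x)=\chi_{(0,1/2)}(x)\,x^{-1}(\log x)^{-2}$, whose maximal function behaves like $x^{-1}|\log x|^{-1}$ near the origin), so the left-hand side is $+\infty$ for suitable $F$ whatever the decay of $\Psi$ at infinity. Marcinkiewicz interpolation cannot help, since it never produces a strong endpoint from a weak one, and ``local $A_1$ behavior'' of $\Psi$ only yields weak-type $(1,1)$ conclusions. This is the well-known failure of the $L^1$ endpoint of the Fefferman--Stein inequality.

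The paper's proof is organized precisely to sidestep this obstruction: it never integrates $\mathcal{M}_R F$ against a weight at the $L^1$ scale. Instead it partitions $\mathbb{H}_n$ into annuli $E_k$, notes that on $E_k$ only $f\chi_{\overline{B(0,Rb^k)}}$ contributes to $\mathcal{M}_R f$, and measures $\mathcal{M}_R$ in $L^r(E_k)$ with $r<1$, where Kolmogorov's inequality converts the weak $(1,1)$ bound into a genuine norm estimate on the finite-measure set $E_k$. Combined with the vector-valued maximal inequality of Grafakos--Liu--Yang and H\"older against $V_k=\|v^{-1/p}\|_{L^{p'}(\overline{B(0,Rb^k)})}$, this gives a vector-valued $L^p(v)\to L^r(E_k)$ estimate, and the Rubio de Francia factorization theorem then \emph{produces} a weight $U_k$ on $E_k$ (rather than writing one down explicitly); the global $u$ is a convergent weighted sum of the $U_k$. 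If you want to keep an explicit construction in the spirit of yours, you would need to replace your last step by a verification of Sawyer's testing condition for the pair $(u,v)$, which is a substantially different argument; as written, your proof cannot be completed.
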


\begin{proof}
We partition the Heisenberg space by choosing $b>1$ and defining the sets
$$ E_0 := \{g\in\mathbb{H}_n: d(g)<1\}, $$
$$ E_k := \{g\in\mathbb{H}_n: b^{k-1}\le d(g) < b^{k}\}, \quad \text{ for } k\in\mathbb{N}. $$
For each $k\in\mathbb{N}_0$ and $f\in L^p(\mathbb{H}_n,v)$, we decompose the function as $$f = f \chi_{\overline{B(0,Rb^k)}} + f \chi_{\overline{B(0,Rb^k)}^{c}} =: f' + f''.$$ It follows that $\mathcal{M}_R {f''}(g)=0$ for every $g \in E_k$.
	
The existence of the component weights $U_k$ supported on $E_k$ relies on the Rubio de Francia factorization Theorem (which can be found in \cite{GCRDF1985}, Section VI, Theorem 4.2). For a sequence of measurable functions $\textbf{f}=(f_j)_j$ on $\mathbb{H}_n$, let $T \textbf{f} = (T f_j)_j$ for a sublinear operator $T$ and $|\textbf{f}|_p = \left( \sum_j |f_j|^p \right)^{\frac{1}{p}}$. 
	
Choosing $r<1$ and fixing $k\in\mathbb{N}_0$, we write
\begin{equation}\label{v-v.ineq.1}
	\left|\left| \left(\sum\limits_{j} |\mathcal{M}_R f_j|^p \right)^{\frac{1}{p}} \right|\right|_{L^r(E_k)} \le \left|\left| |\mathcal{M}_R \textbf{f'} |_p \right|\right|_{L^r(E_k)},
\end{equation}
where $|\mathcal{M}_R \textbf{f'} |_p = \left(\sum_{j} |\mathcal{M}_R {f_j}'|^p \right)^{\frac{1}{p}}$. Using the distribution function characterization of the $L^r(E_k)$ norm and Kolmogorov's condition \cite{GCRDF1985}, we obtain
\begin{equation}\label{v-v.ineq.2}
	|| |\mathcal{M}_R \textbf{f'} |_p ||_{L^r(E_k)} \le c |E_k|^{\frac{1-r}{r}} || |\mathcal{M} \textbf{f'} |_p ||_{L^1(\mathbb{H}_n)},
\end{equation}
where $c=c_{r,p}$. The Fefferman-Stein vector-valued maximal function inequality for spaces of homogeneous type \cite{GLY2009} implies
\begin{equation}\label{v-v.ineq.3}
	|| |\mathcal{M} \textbf{f'} |_p ||_{L^1(\mathbb{H}_n)} \le c || |\textbf{f}|_p ||_{L^1(\mathbb{H}_n)} = c \left|\left| \left( \sum\limits_j |f_j|^p \right)^{\frac{1}{p}} \right|\right|_{L^1(\mathbb{H}_n)}.
\end{equation}
Applying Hölder's inequality to the product $(f_j v^{\frac{1}{p}})v^{-\frac{1}{p}}$ gives us
\begin{equation}\label{v-v.ineq.4}
	\left|\left| \left( \sum\limits_j |f_j|^p \right)^{\frac{1}{p}} \right|\right|_{L^1(\mathbb{H}_n)} \le V_k \left( \sum\limits_j ||f_j||^p_{L^p(\mathbb{H}_n,v)}\right)^{\frac{1}{p}}, 
\end{equation}
where
\begin{equation}\label{Vk}
	V_k=\left( \int\limits_{\overline{B(0,Rb^k)}} v^{-\frac{p'}{p}}(h)dh \right)^{\frac{1}{p'}},
\end{equation}
which is finite since $v^{-\frac{1}{p}} \in L^{p'}_{loc}(\mathbb{H}_n)$. Combining \eqref{v-v.ineq.1}-\eqref{v-v.ineq.4}, we get the vector-valued estimate
\begin{equation}\label{v-v.ineq}
	\left|\left| \left(\sum\limits_{j} |\mathcal{M}_R f_j|^p \right)^{\frac{1}{p}} \right|\right|_{L^r(E_k)} \le c V_k |E_k|^{\frac{1-r}{r}} \left( \sum\limits_j ||f_j||^p_{L^p(\mathbb{H}_n,v)}\right)^{\frac{1}{p}}.
\end{equation}
	
By the Rubio de Francia factorization Theorem, this estimate for $0<r<1<p<\infty$ guarantees the existence of a weight $U_k$ supported in $E_k$ such that 
\begin{equation}\label{U.norm}
	||U_k^{-1}||_{L^{\frac{r}{p-r}}(\mathbb{H}_n)} \le 1,
\end{equation} 
and for any $f\in L^{p}(\mathbb{H}_n,v)$,
$$ \int\limits_{E_k} |\mathcal{M}_R f(h)|^p U_k(h) dh \le c_k^p ||f||^p_{L^{p}(\mathbb{H}_n,v)}, $$
where $c_k = c V_k |E_k|^{\frac{1-r}{r}}$.

We construct the weight function $u$ by 
\begin{equation}\label{wgt.u}
	u(g) = \sum\limits_{k\in\mathbb{N}_0} (b^k c_k)^{-p} U_k(g)\chi_{E_k}(g).
\end{equation}
Thus, if $f\in L^{p}(\mathbb{H}_n,v)$, we conclude
\begin{equation}\label{mxml.bd.series}
	\int\limits_{\mathbb{H}_n} |\mathcal{M}_{R}f(g)|^p u(g) dg \le \sum\limits_{k\in\mathbb{N}_0} b^{-kp} ||f||^p_{L^p(\mathbb{H}_n,v)},
\end{equation}
and the geometric series converges since we chose $b>1$.
\end{proof}


We need one more observation before we can proceed to prove Theorem \ref{Thm:local.maximal.operator.weightedLp.boundedness}. From Theorem \ref{thm:2.wgt} we can deduce the following Corollary:

\begin{cor}\label{Cor:u.in.Dq}
With the notation of Theorem \ref{thm:2.wgt}, if $q>p$ and $v\in D_{p}(\mathcal{L},S)$ then we can choose the weight $u\in D_{q}(\mathcal{L},S)$.
\end{cor}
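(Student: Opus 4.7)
By Corollary \ref{Cor:weight.characterization}, $u\in D_q(\mathcal{L},S)$ is equivalent to $u^{-1/q}\phi_s\in L^{q'}(\mathbb{H}_n)$ for every $s\in(0,S)$. Using formula \eqref{wgt.u} and the disjointness of the $E_k$'s,
$$\int_{\mathbb{H}_n}u^{-q'/q}\phi_s^{q'}\,dg=\sum_{k\in\mathbb{N}_0}(b^kc_k)^{p\gamma q'/q}\int_{E_k}U_k^{-q'/q}\phi_s^{q'}\,dg,$$
so the plan is to show this series is finite for every $s\in(0,S)$. I would keep $\gamma=1$ (as in the proof of Theorem \ref{thm:2.wgt}; this collapses the convergence requirement \eqref{mxml.bd.series} to the trivially finite $\sum b^{-kp}$) and tune the remaining parameters $r<1$ and $b>1$.

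The key H\"older step on $E_k$ uses exponents $\alpha:=qr/((p-r)q')$ and its conjugate $\alpha'$. The threshold $\alpha>1$ translates to $r>p/q$, which is achievable inside $(0,1)$ precisely because $q>p$. With such an $r$,
$$\int_{E_k}U_k^{-q'/q}\phi_s^{q'}\,dg\le\left(\int_{\mathbb{H}_n}U_k^{-r/(p-r)}\right)^{\!1/\alpha}\!\left(\int_{E_k}\phi_s^{q'\alpha'}\,dg\right)^{\!1/\alpha'}\le\left(\int_{E_k}\phi_s^{q'\alpha'}\,dg\right)^{\!1/\alpha'}$$
by the normalization \eqref{U.norm}. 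Next, the Gaussian factor in \eqref{candidate} dominates the polynomial corrections, so on $E_k$ (where $d(g)\ge b^{k-1}$) the last quantity is bounded by a polynomial in $b^k$ times $e^{-c(s)b^{2(k-1)}}$. Meanwhile $c_k=cV_k|E_k|^{(1-r)/r}$, where $V_k$ can be controlled via $v\in D_p(\mathcal{L},S)$: for any $s_0\in(0,S)$, a pointwise lower bound for $\phi_{s_0}$ on $B(0,Rb^k)$ gives an estimate of the form $V_k\le C_{s_0}\,e^{R^2b^{2k}/(4s_0)}\cdot\mathrm{poly}(b^k)$.

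Combining these, the general summand is a polynomial in $b^k$ times $\exp\!\bigl(q'b^{2k}[pR^2/(4qs_0)-c/(sb^2)]\bigr)$. The series converges as soon as the bracketed exponent is negative; choosing $s_0$ close to $S$ and $b>1$ close enough to $1$, this holds uniformly for $s\in(0,S)$. The main obstacle is precisely this balance: $V_k$ may grow sub-Gaussianly in $b^{2k}$ (the best bound extractable from $v\in D_p$), and must be beaten by the Gaussian decay of $\phi_s^{q'}$ on $E_k$, which forces $b$ close to $1$. The assumption $q>p$ enters in two places: it makes $r\in(p/q,1)$ available for the H\"older step, and supplies enough margin in the exponent $p/q<1$ so that the $V_k$ prefactor can be absorbed.
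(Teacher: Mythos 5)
Your overall architecture is the same as the paper's: build $u$ by \eqref{wgt.u}, reduce $u\in D_q(\mathcal{L},S)$ via Corollary \ref{Cor:weight.characterization} to the convergence of $\sum_k \rho_k^{-q'/q}\int_{E_k}U_k^{-q'/q}\phi_s^{q'}$, control the $U_k$ factor through the normalization \eqref{U.norm}, and control $c_k$ through $V_k$. The decisive difference is in the parameters. The paper takes $r=p/q$ exactly, so that $q'/q=r/(p-r)$ and the $E_k$ integral is bounded by \eqref{U.norm} together with the mere boundedness of $\phi_s$ (no H\"older step, no use of Gaussian decay), and it switches to $\gamma=-1$, so that $\rho_k^{-q'/q}=(b^kc_k)^{-pq'/q}$ carries a \emph{negative} power of $V_k$; that factor causes no growth in $k$, and the remaining summand is a pure power of $b^k$ (using $|E_k|\lesssim b^{(2n+2)k}$) with negative exponent, summable for any $b>1$. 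You keep $\gamma=1$, which leaves the \emph{positive} power $V_k^{pq'/q}$ in the summand and forces you into a Gaussian-versus-Gaussian competition. Your H\"older step with $r\in(p/q,1)$ is fine as far as it goes; the problem is the competition you then set up.

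The gap is in the claim that the bracketed exponent $\frac{pR^2}{4qs_0}-\frac{c}{sb^2}$ can be made negative uniformly for $s\in(0,S)$ by taking $s_0$ close to $S$ and $b$ close to $1$. From $v\in D_p(\mathcal{L},S)$ the best bound on \eqref{Vk} is indeed $V_k\lesssim_{s_0}e^{R^2b^{2k}/(4s_0)}\,\mathrm{poly}(b^k)$ with $s_0<S$, and this order of growth is attained (take $v^{-1/p}\sim e^{d^2/(4S)}$ up to polynomial corrections): the point is that $V_k$ is computed over the ball of radius $Rb^k$, whereas the decay you harvest from $\bigl(\int_{E_k}\phi_s^{q'\alpha'}\bigr)^{1/\alpha'}$ lives only at distance $d\ge b^{k-1}$ and is $e^{-q'b^{2k}/(4sb^2)}$. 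Letting $s\uparrow S$ and then $s_0\uparrow S$, $b\downarrow 1$, the negativity requirement collapses to $q>pR^2$, which does not follow from $q>p$ unless $R\le 1$; and $R$ is a fixed constant of the local maximal operator, not a tunable parameter. The mismatch factor $R^2$ between the two radii survives every limit you propose, so for $R>\sqrt{q/p}$ your series genuinely diverges for $s$ near $S$. Two repairs are available: either adopt the paper's device of flipping the sign of the power of $V_k$ (choose $\gamma$ so that only $V_k$ to a negative power appears, which is bounded since $V_k\ge V_0>0$), or observe that $\mathcal{M}_Rf''=0$ on $E_k$ already for the cut-off $f'=f\chi_{\overline{B(0,b^k+R)}}$, so that $V_k$ may be taken over a ball of radius $b^k+R$ instead of $Rb^k$; since $(b^k+R)^2/b^{2(k-1)}\to b^2$ as $k\to\infty$, your balance then closes for $b^2<q/p$.
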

\begin{proof}
Let us proceed with the weight function $u$ constructed in Theorem \ref{thm:2.wgt}. We will show that by choosing $b>1$ sufficiently large, the weight $u$ belongs to the desired class.
	
Indeed, from \eqref{wgt.u} the weight function $u$ is of the form $u(g)=\sum_{k\in\mathbb{N}_0} \rho_k \omega_k(g)$, with $\rho_k = (b^k c_k)^{-p}$ and $\omega_k(g)=U_k(g)\chi_{E_k}(g)$, where $c_k=c V_k|E_k|^{\frac{1-r}{r}}$. This gives us the estimation
\begin{align}\label{u.bd.1}
	\int\limits_{\mathbb{H}_n} (u(g)^{-\frac{1}{q}}\phi_s(g))^{q'}dg & \le \sum\limits_{k\in\mathbb{N}_0}  \rho_k^{-\frac{q'}{q}} \int\limits_{E_{k}} U_k(g)^{-\frac{q'}{q}} \phi_s(g)^{q'} dg.
\end{align}
Now we choose $r<1$ such that $\frac{r}{p-r}=\frac{q'}{q}$. Hence, $r=\frac{p}{q}$ and $\frac{1-r}{r}=\frac{q}{p}-1$. Thus, from \eqref{U.norm} and the fact that $\phi_s$ is bounded on each compact block, the integral in the right hand side of \eqref{u.bd.1} is controlled. Next, for the constant part of the series we use \eqref{Vk} to write:
\begin{align*}
	V_k = & || v^{-\frac{1}{p}} \phi_s \phi_s^{-1} \chi_{B(0,Rb^k)}||_{L^{p'}(\mathbb{H}_n)} \le \max_{g\in{B(0,Rb^k)}}\phi_s^{-1}(g) || v^{-\frac{1}{p}} \phi_s||_{L^{p'}(\mathbb{H}_n)}.
\end{align*}
Since $v\in D_p(\mathcal{L},S)$, the norm $||v^{-1/p}\phi_s\|_{L^{p'}}$ is finite. The factor $\max \phi_s^{-1}$ grows polynomially with respect to $b^k$. Thus, we obtain
\begin{align}\label{rhok}
	\rho_k^{-\frac{q'}{q}}\le & c b^{-k\frac{p}{q}q'} V_k^{\frac{p}{q}q'} |E_k|^{\frac{p}{q}(1-q')}.
\end{align}
Recall that $E_k\subset B(0,b^k)$, and since $|B(0,b^k)|\le c b^{(2n+2)k} |B(0,1)|$, where $2n+2$ is the homogeneous degree of $\mathbb{H}_n$ (see \cite{Li2009}). By \eqref{rhok} and this estimation of the measure of $E_k$, the polynomial growth in $b^k$ of the terms $V_k$ and $|E_k|$ is compensated by the geometric decay factor $b^{-k\frac{p}{q}q'}$. Going back to \eqref{u.bd.1}, we can choose $b>1$ large enough such that the series converges, which means that $u\in D_{q}(\mathcal{L},S)$.
\end{proof}


\par We are now, finally, ready to prove Theorem \ref{Thm:local.maximal.operator.weightedLp.boundedness}.

\begin{proof}[Proof of  Theorem \ref{Thm:local.maximal.operator.weightedLp.boundedness}]
	
\par Let $1<p<\infty$, $a>0$, $v\in D_{p}(\mathcal{L},S)$, and $f\in L^{p}(\mathbb{H}_n,v)$. To show that there exists a weight $u$ such that $Q_a^{\ast}f\in L^{p}(\mathbb{H}_n,u)$, we recall from \eqref{Qast.bound} that the operator satisfies the estimate
\begin{align} \nonumber
	Q_a^{\ast}f(g) & = \sup\limits_{0<s<a} |e^{-s\mathcal{L}}f(g)|  \le Af(g) + Bf(g),
\end{align}
where $Af(g) = C_1 (1+\varphi_a(g)) \int\limits_{\mathbb{H}_n} |f(h)| \phi_a(h) dh $, with the function $\varphi_a$ defined in \eqref{Qast.bound} by choosing a suitable constant $M>1$ such that $\varphi_a(g)=M^2 \left( 1 + \frac{|z_0|d(g)}{a} \right)^{n-\frac{1}{2}}$, and $Bf(g)= C_2 \mathcal{M}_R f(g)$.
	
\par We first observe that from H\"older's inequality we have that
\begin{align*}
	|Af(g)| \le & c_1 (1+\varphi_a(g)) ||v^{-\frac{1}{p}}\phi_a||_{L^{p'}(\mathbb{H}_{n})} ||f||_{L^p(\mathbb{H}_n,v)}.
\end{align*}
|
	
\par Next, we apply Theorem \ref{thm:2.wgt} to obtain a weight $u_2(g)$ that satisfies
\begin{align*}
	||Bf(g)||_{L^p(\mathbb{H}_n,u_2)} \le & c ||f||_{L^p(\mathbb{H}_n,v)}.
\end{align*}
And by Corollary \ref{Cor:u.in.Dq}, this weight $u_2$ can chosen the class $D_{q}(\mathcal{L},S)$.
	
\par It remains only to define the weight $u(g):=\min\{u_1(g),u_2(g)\}$, which satisfies all the required properties.
	
\end{proof}

\end{document}